\title[]{A note on probability metrics in a categorical setting}
\author{Ben Berckmoes and Bob Lowen}
\date{}
\begin{document}

\dedicatory{Dedicated to the memory of Horst Herrlich, a great mathematician and a great friend.}

\maketitle

\newtheorem{pro}{Proposition}[section]
\newtheorem{lem}[pro]{Lemma}
\newtheorem{thm}[pro]{Theorem}
\newtheorem{de}[pro]{Definition}
\newtheorem{co}[pro]{Comment}
\newtheorem{no}[pro]{Notation}
\newtheorem{vb}[pro]{Example}
\newtheorem{vbn}[pro]{Examples}
\newtheorem{gev}[pro]{Corollary}
\newtheorem{rem}[pro]{Remark}
\newtheorem{ass}[pro]{Assumption}

\begin{abstract}
Probability metrics constitute an important tool in probability theory and statistics \cite{DKS91}, \cite{R91}, \cite{Z83} as they are specific metrics on spaces of random variables which, by satisfying an extra condition, concord well with the randomness structure. But probability metrics suffer from the same instability under constructions as metrics. In \cite{L15}, as well as in former and related work which can be found in the references of \cite{L15}, a comprehensive setting was developed to deal with this. It is the purpose of this note to point out that these ideas can also be applied to probability metrics thus embedding them in a natural categorical framework, showing that certain constructions performed in the setting of probability theory are in fact categorical in nature. This allows us to deduce various separate results in the literature from a unified approach.
\end{abstract}

\section{Introduction}

We define a basic concrete category $\mathsf{Pol_u}$ (see section 3) which is a natural ``habitat" for the notion of probability metric  \cite{DKS91}, \cite{R91}, \cite{Z83}. This has several advantages. In the first place, by extending the notion of probability metric to a ``uniform approach theoretic" counterpart we make it possible to perform constructions (see the Ky Fan and Prokhorov probability uniform gauge structures defined in Examples \ref{vbn:PGs}) which in the metric realm are unavailable. In the second place we are able to describe a much used association of minimal probability metric to a given probability metric, as a simple reflection in our basic category. Consequently, by also describing the underlying approach structures via their limit operators we are able to prove a general result from which four different results in the literature (all with different proofs) can be deduced (see Theorem \ref{thm:LimOpMet}, Corollary \ref{gev:GenConv} and the comments thereafter). Finally also certain relations between the approach structures of convergence in probability and the weak approach structure can be deduced from a general result (see Theorem \ref{thm:MinLimOp} and Corollary \ref{last}).

\section{Probability uniform gauges}

Recall that a Polish space is a completely metrizable and separable topological space. For a Polish space $X$, we denote its Borel $\sigma$-field by $\mathcal{B}_X$, and the collection of probability measures on $\mathcal{B}_X$ by $\mathcal{P}(X)$. Furthermore, for a probability space $(\Omega,\mathcal{F},\mathbb{P})$, we denote the set of $X$-valued random variables on $\Omega$ by $\mathcal{R}(\Omega, X)$. Finally, for $\xi \in \mathcal{R}(X)$, we let $\mathcal{L}(\xi)$ be the image measure of $\xi$, that is, $\mathcal{L}(\xi)[A] = \mathbb{P}[\xi \in A]$ for all $A \in \mathcal{B}_X$. 

\label{ass:RichPS}
Throughout, we fix a single probability space $(\Omega,\mathcal{F},\mathbb{P})$ with the property that for each Polish space $X$ and each probability measure $P \in \mathcal{P}(X)$ there exists a random variable $\xi \in \mathcal{R}(X)$ such that $P = \mathcal{L}(\xi)$. For the existence of such a probability space, we refer the reader to \cite{R91}. Therefore, henceforth, we will eliminate all reference to $\Omega$ from our notations, and simply write $\mathcal{R}(X)$ instead of $\mathcal{R}(\Omega, X)$.

Let $X$ be a Polish space. A map 
\begin{equation*}
d: \mathcal{R}(X) \times \mathcal{R}(X) \rightarrow [0,\infty] : (\xi,\eta) \mapsto d(\xi,\eta) 
\end{equation*}
is called a {\em probability metric} iff it is a metric (neither necessarily with underlying Hausdorff topology nor necessarily finite)
which satisfies the additional property
\begin{itemize}
\item[(PM)] $\forall \xi, \eta, \xi^\prime, \eta^\prime \in \mathcal{R}(X) : \mathcal{L}(\xi,\eta) = \mathcal{L}(\xi^\prime,\eta^\prime) \Rightarrow d(\xi,\eta) = d(\xi^\prime,\eta^\prime)$.
\end{itemize}

\label{rem:PQMreg}
If $d$ is a probability metric on $\mathcal{R}(X)$ then it follows immediately from {\upshape(PM)} that 
\begin{equation*}
(\mathbb{P}[\xi \neq \xi^\prime] = 0 \textrm{ and } \mathbb{P}[\eta \neq \eta^\prime] = 0) \Rightarrow d(\xi,\eta) = d(\xi^\prime,\eta^\prime)
\end{equation*}
for all $\xi,\eta,\xi^\prime,\eta^\prime \in \mathcal{R}(X)$.

\begin{vbn}\label{vbn:PMs}
Let $(X,d)$ be a complete and separable metric space. The following metrics are all examples of well-known probability metrics on $\mathcal{R}(X)$, see e.g. \cite{R91} and \cite{Z83}.
\begin{enumerate}
\item{} The {\em Ky-Fan metric} with parameter $\lambda \in \mathbb{R}^+_0$:
\begin{equation*}
K_\lambda(\xi,\eta) = \inf \{\epsilon > 0 \mid \mathbb{P}[d(\xi,\eta) \geq \lambda \epsilon] < \epsilon\},
\end{equation*}
\item{} for $p \in \mathbb{R}^+_0$, the {\em $L^p$ metric}:
\begin{equation*}
d_p(\xi,\eta) = \left(\int d^p(\xi,\eta) d\mathbb{P}\right)^{1/p},
\end{equation*}
\item{} the {\em $L^{\infty}$ metric}:
\begin{equation*}
d_{\infty}(\xi,\eta) = \inf \{\epsilon > 0 \mid \mathbb{P}[d(\xi,\eta) \geq \epsilon] = 0\},
\end{equation*} 
\item the {\em indicator metric}:
\begin{equation*}
d_i(\xi,\eta) = \mathbb{P}[d(\xi,\eta) > 0],
\end{equation*}
\item{} the {\em Prokhorov metric} with parameter $\lambda \in \mathbb{R}^+_0$:
\begin{equation*}
\rho_{\lambda}(\xi,\eta) = \inf \left\{\epsilon > 0 \mid \forall A \in \mathcal{B}_X : \mathcal{L}(\xi)[A] \leq \mathcal{L}(\eta)\left[A^{(\lambda \epsilon)}\right] + \epsilon\right\},
\end{equation*}
where $A^{(\lambda \epsilon)} = \{x \in X \mid \inf_{a \in A} d(x,a) \leq \lambda \epsilon\}$,
\item{} the {\em total variation metric}:
\begin{equation*}
d_{TV}(\xi,\eta) = \sup_{A \in \mathcal{B}_X} \left|\mathcal{L}(\xi)[A] - \mathcal{L}(\eta)[A]\right|.
\end{equation*}
\end{enumerate}
\end{vbn}

In the setting of approach theory, the main way to go from a metric structure to a uniform gauge structure (see \cite{L15}) is precisely via so-called uniform gauges. A uniform gauge $\mathcal{G}$ is a collection of metrics satisfying the following stability property: any quasi-metric $d$ which is \emph{dominated by} $\mathcal{G}$, i.e. for which for all $\varepsilon >0$ and $\omega <\infty$ there exists a $d^{\varepsilon ,\omega }\in \mathcal{G}$
such that
$$
d \wedge \omega \leq
d^{\varepsilon ,\omega } +\varepsilon,
$$
already belongs to $\mathcal{G}$.

We will now call a uniform gauge $\mathcal{G}$ on $\mathcal{R}(X)$ a {\em probability uniform gauge} if it has a basis consisting of probability metrics.

If $\mathcal{G}$ is a probability uniform gauge on $\mathcal{R}(X)$ and $\lambda_{\mathcal{G}}$ the limit operator associated with its underlying approach structure, then it follows easily from the foregoing that 
\begin{equation*}
(\mathbb{P}[\xi \neq \xi^\prime] = 0 \textrm{ and } \forall n : \mathbb{P}[\xi_n \neq \xi_n^\prime] = 0) \Rightarrow \lambda_\mathcal{G}(\xi_n \rightarrow \xi) = \lambda_{\mathcal{G}}(\xi_n^\prime \rightarrow \xi^\prime)
\end{equation*}
for all $\xi, \xi^\prime \in \mathcal{R}(X)$ and all $(\xi_n)_n, (\xi_n^\prime)_n$ in $\mathcal{R}(X)$.

\begin{vbn}\label{vbn:PGs}
The two indexed families of probability metrics in Examples \ref{vbn:PMs} give rise to two interesting probability uniform gauges.
\begin{enumerate}
\item{} The parametrized Ky-Fan metrics $\{K_\lambda \mid \lambda \in \mathbb{R}^+_0\}$ are a basis for a unique probability uniform gauge $\mathcal{G}_K$ on $\mathcal{R}(X)$, which we call the {\em Ky-Fan probability uniform gauge}. In \cite{L15} it is shown that the underlying approach structure of $\mathcal{G}_K$ is the approach structure of convergence in probability $\mathcal{A}_{\mathbb{P}}$.

\item{} The parametrized Prokhorov metrics $\{\rho_\lambda \mid \lambda \in \mathbb{R}^+_0\}$ are a basis for a unique probability uniform gauge $\mathcal{G}_P$ on $\mathcal{R}(X)$, which we call the {\em Prokhorov probability uniform gauge}. In \cite{B14} it is shown that the underlying approach structure of $\mathcal{G}_P$ is the weak approach structure $\mathcal{A}_w$.
\end{enumerate}
\end{vbn}

A probability metric $d$ on $\mathcal{R}(X)$ is said to be {\em simple} if
\begin{equation*}
\mathcal{L}(\xi) =  \mathcal{L}(\xi^\prime) \Rightarrow d(\xi,\xi^\prime) = 0\label{eq:ConSimple}
\end{equation*}
for all $\xi,\xi^\prime \in \mathcal{R}(X)$.

\label{rem:SQ}
It is easily seen that a probability metric $d$ on $\mathcal{R}(X)$ is simple if and only if
\begin{equation*}
(\mathcal{L}(\xi) = \mathcal{L}(\xi^\prime) \textrm{ and } \mathcal{L}(\eta) = \mathcal{L}(\eta^\prime) \Rightarrow d(\xi,\eta) = d(\xi^\prime,\eta^\prime) 
\end{equation*}
for all $\xi,\xi^\prime,\eta,\eta^\prime \in \mathcal{R}(X)$.

\begin{vbn}
Among the probability metrics defined in Examples \ref{vbn:PMs}, $\rho_\lambda$ and $d_{TV}$ are the simple ones.
\end{vbn}

A probability uniform gauge on $\mathcal{R}(X)$ is called {\em simple} iff it has a basis consisting of simple probability metrics.

If $\mathcal{G}$ is a simple probability uniform gauge on $\mathcal{R}(X)$ and $\lambda_{\mathcal{G}}$ the limit operator associated with its underlying approach structure, then it follows easily from the definitions that
\begin{equation*}
(\forall n : \mathcal{L}(\xi) = \mathcal{L}(\xi_n)) \Rightarrow \lambda_\mathcal{G}(\xi_n \rightarrow \xi) = 0
\end{equation*}
for all $\xi \in \mathcal{R}(X)$ and all $(\xi_n)_n$ in $\mathcal{R}(X)$, and that
\begin{equation*}
(\mathcal{L}(\xi) = \mathcal{L}(\xi^\prime) \textrm{ and } \forall n : \mathcal{L}(\xi_n) = \mathcal{L}(\xi_n^\prime)) \Rightarrow \lambda_\mathcal{G}(\xi_n \rightarrow \xi) = \lambda_{\mathcal{G}}(\xi_n^\prime \rightarrow \xi^\prime)
\end{equation*}
for all $\xi, \xi^\prime \in \mathcal{R}(X)$ and all $(\xi_n)_n, (\xi_n^\prime)_n$ in $\mathcal{R}(X)$.

\begin{vb}
Among the probability uniform gauges defined in Examples \ref{vbn:PGs}, $\mathcal{G}_P$ is the simple one.
\end{vb}

\section{The categorical setup}

We now define the following basic category.
\vspace{5pt}

{\em{Objects}}: an object is a pair $(X,\mathcal{G})$ which consists of a Polish space $X$ and a probability uniform gauge $\mathcal{G}$ on $\mathcal{R}(X)$.  Such a space is called a {\em probability uniform gauge space}.

{\em{Morphisms}}: given probability uniform gauge spaces $(X,\mathcal{G}_X)$ and $(Y,\mathcal{G}_Y)$, a morphism is a map $f : X \rightarrow Y$ with the following properties:

\begin{enumerate}
	\item The map $f : (X,\mathcal{B}_X) \rightarrow (Y,\mathcal{B}_Y)$ is $\mathcal{B}_X$-$\mathcal{B}_Y$-measurable.
	\item The map $\overline{f} : (\mathcal{R}(X),\mathcal{G}_X) \rightarrow (\mathcal{R}(\Omega,Y),\mathcal{G}_Y) : \xi \mapsto f \circ \xi$ is a $\mathcal{G}_X$-$\mathcal{G}_Y$-contraction.
\end{enumerate}
These maps will be referred to as  {\em random contractions}.

We denote the category with probability uniform gauge spaces as objects and random contractions as morphisms by $\mathsf{Pol_{u}}$. 

We call a probability uniform gauge space $(X,\mathcal{G}_X)$ {\em simple} iff the probability uniform gauge $\mathcal{G}_X$ is simple. The subcategory of $\mathsf{Pol_{u}}$ consisting of the simple probability uniform gauge spaces is denoted by $\mathsf{sPol_{u}}$.

We will call a probability uniform gauge space where the uniform gauge is generated by a single metric, simply a {\em probability metric space}.

The subcategory of $\mathsf{Pol_{u}}$ consisting of probability metric  spaces will be denoted by $\mathsf{Pol_m}$ and the subcategory thereof consisting of simple probability metric spaces as $\mathsf{sPol_m}$.

Let $d$ be a probability metric on $\mathcal{R}(X)$. Define the map
\begin{equation*}
\widehat{d} : \mathcal{R}(X) \times \mathcal{R}(X) \rightarrow [0,\infty]
\end{equation*}
by setting
\begin{equation}
\widehat{d}(\xi,\eta) = \inf \{d(\xi^\prime,\eta^\prime) | \xi^\prime, \eta^\prime \in \mathcal{R}(X), \mathcal{L}(\xi^\prime) = \mathcal{L}(\xi), \mathcal{L}(\eta^\prime) = \mathcal{L}(\eta)\}.\label{eq:defMinQm}
\end{equation}

Proposition \ref{thm:dhatisMetric} belongs to the folklore of the theory of probability metrics, see e.g.  \cite{R91} and \cite{Z83}. For the sake of self-containedness, we include a proof based on Lemma \ref{lem:PastingLemma}, which is known as the Gluing Lemma.

Let $J \subset K$ be countable sets, $\{X_k \mid k \in K\}$ a collection of Polish spaces, and $\pi \in \mathcal{P}(\Pi_{k \in K} X_k)$. Then we write the image measure of $\pi$ under the projection $\Pi_{k \in K} X_k \rightarrow \Pi_{j \in J} X_j : (x_k)_{k \in K} \mapsto (x_j)_{j \in J}$ as $\pi^{J}$.

\begin{lem}[Gluing Lemma]\label{lem:PastingLemma}
Let $X_0,X_1,X_2$ be Polish spaces, and $\pi_1 \in \mathcal{P}(X_0 \times X_1)$ and $\pi_2 \in \mathcal{P}(X_1  \times X_2)$ such that $\pi_1^{\{1\}} = \pi_2^{\{1\}}$. Then there exists a probability measure $\pi \in \mathcal{P}(X_0 \times X_1 \times X_2)$ such that $\pi^{\{0,1\}} = \pi_1$ and $\pi^{\{1,2\}} = \pi_2$. 
\end{lem}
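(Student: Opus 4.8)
The plan is to construct $\pi$ by coupling $X_0$ and $X_2$ so that they become conditionally independent given the shared $X_1$-coordinate. The one substantial ingredient is the disintegration theorem, which is available precisely because $X_0,X_1,X_2$ are Polish, hence standard Borel. First I would set $\mu := \pi_1^{\{1\}} = \pi_2^{\{1\}}$ for the common $X_1$-marginal and disintegrate each measure over $\mu$, obtaining Borel families of probability kernels $(\kappa_{x_1})_{x_1 \in X_1}$ on $X_0$ and $(\lambda_{x_1})_{x_1 \in X_1}$ on $X_2$ with
\[
\pi_1(dx_0,dx_1) = \kappa_{x_1}(dx_0)\,\mu(dx_1), \qquad \pi_2(dx_1,dx_2) = \mu(dx_1)\,\lambda_{x_1}(dx_2).
\]

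Next I would define $\pi$ on $X_0 \times X_1 \times X_2$ by prescribing, for every bounded Borel map $g$,
\[
\int g\,d\pi = \int_{X_1} \left( \int_{X_0} \int_{X_2} g(x_0,x_1,x_2)\,\lambda_{x_1}(dx_2)\,\kappa_{x_1}(dx_0) \right) \mu(dx_1).
\]
Measurability of the kernels makes the inner integral a Borel function of $x_1$, and since $\kappa_{x_1}$, $\lambda_{x_1}$ and $\mu$ are all probabilities, taking $g \equiv 1$ shows that $\pi$ is a probability measure.

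Finally I would verify the two marginal identities. Choosing $g$ to depend only on $(x_0,x_1)$ forces the inner $\lambda_{x_1}$-integral to contribute the factor $1$, and the remaining expression is exactly the disintegration of $\pi_1$, so $\pi^{\{0,1\}} = \pi_1$; symmetrically, letting $g$ depend only on $(x_1,x_2)$ collapses the $\kappa_{x_1}$-integral and recovers $\pi^{\{1,2\}} = \pi_2$.

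The main obstacle is really just the appeal to disintegration: one needs that every probability measure on a product of Polish spaces admits a regular conditional distribution over a marginal, with the conditioning kernel measurable in the conditioning variable. Granting this, the construction above and the two marginal checks are a routine application of the Fubini theorem for kernels.
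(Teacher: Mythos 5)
Your proof is correct, and it is essentially the argument the paper outsources: the paper's ``proof'' is only a citation to Villani \cite{V03}, p.~208, where the Gluing Lemma is established by exactly this disintegration-and-conditional-independence construction on Polish (standard Borel) spaces. The one step you wave at --- that your integral formula actually defines a countably additive measure --- is routine kernel composition ($x_1 \mapsto \kappa_{x_1} \otimes \lambda_{x_1}$ is a Borel kernel from $X_1$ to $X_0 \times X_2$, integrated against $\mu$), so nothing is missing.
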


\begin{proof} See  \cite{V03}, p 208.
\end{proof}

\begin{pro}\label{thm:dhatisMetric}
The map $
\widehat{d} : \mathcal{R}(X) \times \mathcal{R}(X) \rightarrow [0,\infty]
$
 is a simple probability metric.
\end{pro}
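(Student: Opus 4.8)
The plan is to verify that $\widehat{d}$ satisfies each of the defining properties in turn: the metric axioms (nonnegativity, symmetry, the identity condition, and the triangle inequality), the probability-metric axiom (PM), and finally simplicity. Nonnegativity and symmetry are immediate from the definition, since the defining infimum ranges over a set symmetric in its two arguments and $d$ is itself nonnegative and symmetric. For the identity condition I would note that if $\xi = \eta$ then $\xi' = \eta' = \xi$ is admissible in the infimum, giving $\widehat{d}(\xi,\eta) \le d(\xi,\xi) = 0$; I should also remark that $\widehat{d}$ need not satisfy $\widehat{d}(\xi,\eta)=0 \Rightarrow \xi=\eta$, consistent with the fact that the underlying topology need not be Hausdorff.

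\emph{Property (PM) and simplicity.} These are the cheap wins and I would dispatch them early. The value of $\widehat{d}(\xi,\eta)$ depends on $\xi,\eta$ only through the pair of laws $(\mathcal{L}(\xi),\mathcal{L}(\eta))$, because the infimum is taken over all $\xi',\eta'$ with those prescribed marginal laws; hence $\mathcal{L}(\xi,\eta)=\mathcal{L}(\xi',\eta')$, which in particular forces $\mathcal{L}(\xi)=\mathcal{L}(\xi')$ and $\mathcal{L}(\eta)=\mathcal{L}(\eta')$, immediately yields $\widehat{d}(\xi,\eta)=\widehat{d}(\xi',\eta')$. This gives (PM) at once. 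The same observation shows simplicity in its equivalent reformulation recorded in the remark following the definition of simple: equality of the individual laws $\mathcal{L}(\xi)=\mathcal{L}(\xi')$ and $\mathcal{L}(\eta)=\mathcal{L}(\eta')$ already forces $\widehat{d}(\xi,\eta)=\widehat{d}(\xi',\eta')$.

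\emph{The triangle inequality.} This is the one genuinely substantive step, and it is where the Gluing Lemma enters. Fix $\xi,\zeta,\eta \in \mathcal{R}(X)$ and $\varepsilon>0$. By definition of $\widehat{d}$ as an infimum I can choose admissible pairs nearly attaining $\widehat{d}(\xi,\zeta)$ and $\widehat{d}(\zeta,\eta)$: that is, random variables witnessing $d(\xi',\zeta') \le \widehat{d}(\xi,\zeta)+\varepsilon$ with $\mathcal{L}(\xi')=\mathcal{L}(\xi)$, $\mathcal{L}(\zeta')=\mathcal{L}(\zeta)$, and $d(\zeta'',\eta'') \le \widehat{d}(\zeta,\eta)+\varepsilon$ with $\mathcal{L}(\zeta'')=\mathcal{L}(\zeta)$, $\mathcal{L}(\eta'')=\mathcal{L}(\eta)$. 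The obstacle is that $\zeta'$ and $\zeta''$ are a priori different random variables sharing only the common law $\mathcal{L}(\zeta)$, so one cannot simply chain $d(\xi',\zeta')$ and $d(\zeta'',\eta'')$ through the triangle inequality of $d$. This is exactly what the Gluing Lemma repairs: applying it to the joint laws $\mathcal{L}(\xi',\zeta') \in \mathcal{P}(X\times X)$ and $\mathcal{L}(\zeta'',\eta'') \in \mathcal{P}(X\times X)$, whose common middle marginal is $\mathcal{L}(\zeta)$, produces a measure $\pi \in \mathcal{P}(X\times X\times X)$ with the prescribed two-dimensional marginals. Using the richness assumption on the fixed probability space, I realize $\pi$ as the joint law of a triple $(\widetilde\xi,\widetilde\zeta,\widetilde\eta)$. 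Then $\mathcal{L}(\widetilde\xi,\widetilde\zeta)=\mathcal{L}(\xi',\zeta')$ and $\mathcal{L}(\widetilde\zeta,\widetilde\eta)=\mathcal{L}(\zeta'',\eta'')$, so by (PM) for $d$ the distances are preserved, the marginals satisfy $\mathcal{L}(\widetilde\xi)=\mathcal{L}(\xi)$ and $\mathcal{L}(\widetilde\eta)=\mathcal{L}(\eta)$, and now all three variables live on one space. The triangle inequality for $d$ gives $d(\widetilde\xi,\widetilde\eta) \le d(\widetilde\xi,\widetilde\zeta)+d(\widetilde\zeta,\widetilde\eta) \le \widehat{d}(\xi,\zeta)+\widehat{d}(\zeta,\eta)+2\varepsilon$, and since $(\widetilde\xi,\widetilde\eta)$ is admissible in the infimum defining $\widehat{d}(\xi,\eta)$ we conclude $\widehat{d}(\xi,\eta) \le \widehat{d}(\xi,\zeta)+\widehat{d}(\zeta,\eta)+2\varepsilon$; letting $\varepsilon \to 0$ finishes the proof.
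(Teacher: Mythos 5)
Your proof is correct and follows essentially the same route as the paper: the one substantive point, the triangle inequality, is handled exactly as in the paper's proof, by gluing the joint laws $\mathcal{L}(\xi',\zeta')$ and $\mathcal{L}(\zeta'',\eta'')$ along the common marginal $\mathcal{L}(\zeta)$, realizing the glued measure as the law of a triple on the fixed rich probability space, and transferring distances via {\upshape(PM)}. The only difference is cosmetic: you spell out nonnegativity, symmetry, {\upshape(PM)}, and simplicity, which the paper dismisses as the trivial assertions.
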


\begin{proof}
We prove the only non-trivial assertion, namely that $\widehat{d}$ satisfies the triangle inequality. Let $\xi, \eta, \zeta \in \mathcal{R}(X)$, $\epsilon > 0$, and $\xi^\prime,\zeta^\prime \in \mathcal{R}(X)$ be such that $\mathcal{L}(\xi^\prime) = \mathcal{L}(\xi)$, $\mathcal{L}(\eta^\prime) = \mathcal{L}(\eta)$, and
\begin{equation*}
d(\xi^\prime,\zeta^\prime) \leq \widehat{d}(\xi,\zeta) + \epsilon/2,
\end{equation*}
and let $\zeta^{\prime \prime}, \eta^{\prime \prime} \in \mathcal{R}(X)$ be such that $\mathcal{L}(\zeta^{\prime \prime}) = \mathcal{L}(\zeta)$, $\mathcal{L}(\eta^{\prime \prime}) = \mathcal{L}(\eta)$, and
\begin{equation*}
d(\zeta^{\prime \prime}, \eta^{\prime \prime}) \leq \widehat{d}(\zeta,\eta) + \epsilon/2.
\end{equation*}  
Put $X_0 = X_1 = X_2 = X$. Then, by the gluing lemma, we are allowed to fix $\pi \in \mathcal{P}(X_0 \times X_1 \times X_2)$ for which $\pi^{\{0,1\}} = \mathcal{L}(\xi^\prime,\zeta^\prime)$ and $\pi^{\{1,2\}} = \mathcal{L}(\zeta^{\prime \prime},\eta^{\prime \prime})$. Furthermore, by our assumption on $\Omega$, we find random variables $\xi_0, \eta_0, \zeta_0 \in \mathcal{R}(X)$ such that $\mathcal{L}(\xi_0,\eta_0,\zeta_0) = \pi$. But then
\begin{eqnarray*}
\widehat{d}(\xi,\eta) &\leq& d(\xi_0,\eta_0)\\
 &\leq& d(\xi_0,\zeta_0) + d(\zeta_0,\eta_0)\\ 
&=& d(\xi^\prime,\zeta^\prime) + d(\zeta^{\prime \prime}, \eta^{\prime \prime})\\ 
&\leq& \widehat{d}(\xi,\zeta) + \widehat{d}(\zeta,\eta) + \epsilon,
\end{eqnarray*}
which by the arbitrariness of $\epsilon$ finishes the proof.
\end{proof}

The probability metric $\widehat{d}$ is called the {\em minimal probability metric} associated with $d$.

\begin{vbn}\label{vbn:MinPMs}
For $p \in \mathbb{R}^+_0$, the {\em Wasserstein metric} of order $p$ is defined as:
\begin{equation*}
W_p(\xi,\eta) = \inf d_p(\xi^\prime,\eta^\prime),
\end{equation*} 
the infimum being taken over all $\xi^\prime,\eta^\prime \in \mathcal{R}(X)$ for which $\mathcal{L}(\xi^\prime) = \mathcal{L}(\xi)$ and $\mathcal{L}(\eta^\prime) = \mathcal{L}(\eta)$,
and analogously the {\em Wasserstein metric} of order $\infty$ is defined as:
\begin{equation*}
W_\infty(\xi,\eta) = \inf d_\infty(\xi^\prime,\eta^\prime),
\end{equation*} 
the infimum being taken over all $\xi^\prime,\eta^\prime \in \mathcal{R}(X)$ for which $\mathcal{L}(\xi^\prime) = \mathcal{L}(\xi)$ and $\mathcal{L}(\eta^\prime) = \mathcal{L}(\eta)$.
Hence these are precisely the minimal probability metrics associated with the Prokhorov and $L_\infty$ metrics: $\widehat{d_p} = W_p, \phantom{1} \widehat{d_\infty} = W_\infty$.
For the other probability metrics defined in Examples \ref{vbn:PMs}, the following relations hold, see e.g. \cite{Z83} and \cite{R91}: $
\widehat{K_\lambda} = \rho_\lambda, \phantom{1} \widehat{d_i} = d_{TV}$.
\end{vbn}


\begin{lem}\label{lem:MinGauge}
Let $d_1$ and $d_2$ be probability metrics  and let $\mathcal{D}$ be a collection of probability metrics on $\mathcal{R}(X)$. Then
\begin{equation}
\sup_{d \in \mathcal{D}} \widehat{d} \:\:\leq\:\: \widehat{\sup_{d \in \mathcal{D}}}\label{eq:MinGauge1}
\end{equation}
and, for $\omega < \infty$ and $\epsilon > 0$,
\begin{equation}
d_1 \wedge \omega \leq d_2 + \epsilon \Rightarrow \widehat{d_1} \wedge \omega \leq \widehat{d_2} + \epsilon.\label{eq:MinGauge2}
\end{equation}
\end{lem}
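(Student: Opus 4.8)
The plan is to prove both inequalities pointwise, fixing $\xi,\eta \in \mathcal{R}(X)$ throughout and unwinding the definition \eqref{eq:defMinQm} of the minimal probability metric as an infimum over all pairs $(\xi',\eta')$ whose marginal laws agree with those of $(\xi,\eta)$. Inequality \eqref{eq:MinGauge1}, with right-hand side $\widehat{\sup_{d \in \mathcal{D}} d}$, is at heart nothing but the elementary max-min inequality $\sup\inf \leq \inf\sup$, while \eqref{eq:MinGauge2} will come out of the monotonicity of the truncation $t \mapsto t \wedge \omega$ combined with the defining infimum.

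For \eqref{eq:MinGauge1}, I would set $D = \sup_{d \in \mathcal{D}} d$ and fix an arbitrary $d_0 \in \mathcal{D}$. For every pair $(\xi',\eta')$ with $\mathcal{L}(\xi') = \mathcal{L}(\xi)$ and $\mathcal{L}(\eta') = \mathcal{L}(\eta)$ one has $\widehat{d_0}(\xi,\eta) \leq d_0(\xi',\eta') \leq D(\xi',\eta')$, the first inequality by the definition of $\widehat{d_0}$ and the second because $D$ dominates each member of $\mathcal{D}$. Since the left-hand side does not depend on the coupling, taking the infimum over all such $(\xi',\eta')$ on the right gives $\widehat{d_0}(\xi,\eta) \leq \widehat{D}(\xi,\eta)$. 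As $d_0 \in \mathcal{D}$ was arbitrary, a final supremum over $d_0$ yields $\sup_{d \in \mathcal{D}} \widehat{d}(\xi,\eta) \leq \widehat{D}(\xi,\eta)$, which is \eqref{eq:MinGauge1}.

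For \eqref{eq:MinGauge2}, assume $d_1 \wedge \omega \leq d_2 + \epsilon$. Again fix a pair $(\xi',\eta')$ matching the laws of $(\xi,\eta)$. Because $t \mapsto t \wedge \omega$ is nondecreasing and $\widehat{d_1}(\xi,\eta) \leq d_1(\xi',\eta')$, we get $\widehat{d_1}(\xi,\eta) \wedge \omega \leq d_1(\xi',\eta') \wedge \omega \leq d_2(\xi',\eta') + \epsilon$, the last step by hypothesis. The left-hand side is independent of the coupling, so taking the infimum of the right-hand side over all admissible $(\xi',\eta')$ produces $\widehat{d_1}(\xi,\eta) \wedge \omega \leq \widehat{d_2}(\xi,\eta) + \epsilon$, as required.

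I do not anticipate a serious obstacle: both estimates are one-line consequences of the infimum-over-couplings description of $\widehat{\,\cdot\,}$. The only points demanding care are bookkeeping ones, namely making sure the sup-inf exchange in \eqref{eq:MinGauge1} is used in the correct (and only valid) direction, and verifying that in \eqref{eq:MinGauge2} the truncation may be pulled inside before the infimum is taken, which is legitimate precisely because $t \mapsto t \wedge \omega$ is monotone. Note that the existence of realizing random variables with prescribed laws, guaranteed by the standing richness assumption on $(\Omega,\mathcal{F},\mathbb{P})$, is not actually invoked, since both arguments only use the upper bounds furnished by the definition of the infimum.
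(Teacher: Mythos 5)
Your proposal is correct and takes essentially the same route as the paper: both inequalities are read off pointwise from the coupling-infimum definition \eqref{eq:defMinQm}, with the elementary sup--inf exchange giving \eqref{eq:MinGauge1} and the monotonicity of $t \mapsto t \wedge \omega$ giving \eqref{eq:MinGauge2}, which the paper dismisses as immediate and you spell out correctly. The only cosmetic difference is in \eqref{eq:MinGauge1}: the paper first invokes the law-invariance of $\sup_{d\in\mathcal{D}}\widehat{d}$ to replace $(\xi,\eta)$ by the coupling $(\xi',\eta')$ before bounding by $\sup_{d\in\mathcal{D}} d(\xi',\eta')$, whereas you fix $d_0\in\mathcal{D}$ and deduce $\widehat{d_0}\leq\widehat{D}$ directly from monotonicity of the hat operation, which even bypasses the law-invariance step.
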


\begin{proof}
Since $\sup_{d \in \mathcal{D}} \widehat{d} \:\:\leq\:\: \sup_{d \in \mathcal{D}}$
it follows that for $\xi, \eta, \xi^\prime, \eta^\prime \in \mathcal{R}(X)$ with $\mathcal{L}(\xi) = \mathcal{L}(\xi^\prime)$ and $\mathcal{L}(\eta) = \mathcal{L}(\eta^\prime)$ we have
\begin{equation*}
\sup_{d \in \mathcal{D}} \widehat{d}(\xi, \eta) = \sup_{d \in \mathcal{D}} \widehat{d}(\xi', \eta') \leq \sup_{d \in \mathcal{D}} d(\xi', \eta')\end{equation*}
which proves that
\begin{equation*}
\sup_{d \in \mathcal{D}} \widehat{d}(\xi, \eta) \:\:\leq\:\: \widehat{\sup_{d \in \mathcal{D}}}(\xi, \eta).\end{equation*}
Further (\ref{eq:MinGauge2}) follows at once from the definition.
\end{proof}






\begin{thm}
$\mathsf{sPol_{u}}$ is a reflective subcategory of $\mathsf{Pol_{u}}$ and $\mathsf{Pol_m}$ is a coreflective subcategory of $\mathsf{Pol_{u}}$.
 $$
\xy
\xymatrix{ 
\mathsf{Pol_u}   &  \mathsf{Pol_m} \ar[l]_c  \\
 \mathsf{sPol_u} \ar[u]^r & \mathsf{sPol_m} \ar[l]^c \ar[u]_r
}
\endxy
$$

\end{thm}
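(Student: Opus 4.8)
The plan is to exhibit both (co)reflections concretely, taking in each case the underlying map to be the identity on $X$; then the measurability requirement (1) in the definition of a morphism and the uniqueness of the factorizations are automatic, and only the contraction condition (2) has to be verified. For the reflection I would define $R(X,\mathcal{G}) = (X,\widehat{\mathcal{G}})$, where $\widehat{\mathcal{G}}$ is the probability uniform gauge generated by $\{\widehat{d}\mid d\in\mathcal{B}\}$ for a basis $\mathcal{B}$ of $\mathcal{G}$ consisting of probability metrics. By Proposition \ref{thm:dhatisMetric} each $\widehat{d}$ is a \emph{simple} probability metric, and using \eqref{eq:MinGauge1} together with the stability of simplicity under finite suprema one checks that these do form a basis, so $\widehat{\mathcal{G}}$ is a simple probability uniform gauge. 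Since $\widehat{d}\le d$ for every $d$, one has $\widehat{\mathcal{G}}\subseteq\mathcal{G}$, whence the identity is a random contraction $(X,\mathcal{G})\to(X,\widehat{\mathcal{G}})$; this is the reflection arrow.

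The substance is the universal property. Given a morphism $f\colon(X,\mathcal{G})\to(Y,\mathcal{H})$ with $(Y,\mathcal{H})$ simple, I must show that $f$ is still a random contraction $(X,\widehat{\mathcal{G}})\to(Y,\mathcal{H})$, the factorization through the reflection arrow being then forced since all three maps are $f$ on underlying sets. Fix a simple probability metric $e$ in a basis of $\mathcal{H}$ and consider its pullback $e_f := e\circ(\overline{f}\times\overline{f})$. The key observation, which I expect to be the main obstacle, is that $e_f$ is again \emph{simple}: because $f$ is measurable, $\mathcal{L}(\xi)=\mathcal{L}(\xi')$ forces $\mathcal{L}(f\circ\xi)=\mathcal{L}(f\circ\xi')$, so the two-sided characterization of simplicity of $e$ passes to $e_f$. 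Consequently $\widehat{e_f}=e_f$. Since $f$ is a morphism into $(Y,\mathcal{H})$, the metric $e_f$ is dominated by $\mathcal{G}$; applying \eqref{eq:MinGauge2} of Lemma \ref{lem:MinGauge} to each dominating inequality transfers this to a domination of $\widehat{e_f}=e_f$ by $\{\widehat{d}\mid d\in\mathcal{B}\}$, i.e. by $\widehat{\mathcal{G}}$. Hence $\overline{f}$ is a $\widehat{\mathcal{G}}$-$\mathcal{H}$ contraction, as required.

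For the coreflection I would set $C(X,\mathcal{G})=(X,\mathcal{G}_{d_{\mathcal{G}}})$, where $d_{\mathcal{G}}=\sup\mathcal{G}$ (equivalently the supremum of a probability-metric basis) and $\mathcal{G}_{d_{\mathcal{G}}}$ is the gauge generated by the single metric $d_{\mathcal{G}}$. Suprema preserve both the triangle inequality and property (PM), so $d_{\mathcal{G}}$ is a genuine probability metric and $C(X,\mathcal{G})\in\mathsf{Pol_m}$. As every $d\in\mathcal{G}$ satisfies $d\le d_{\mathcal{G}}$, we have $\mathcal{G}\subseteq\mathcal{G}_{d_{\mathcal{G}}}$, so the identity is a random contraction $(X,\mathcal{G}_{d_{\mathcal{G}}})\to(X,\mathcal{G})$, the coreflection arrow. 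For the universal property, let $h\colon(Y,\mathcal{H}_e)\to(X,\mathcal{G})$ be a morphism out of a probability metric space. Then $d_{\mathcal{G}}\circ(\overline{h}\times\overline{h})=\sup_{d\in\mathcal{B}}\bigl(d\circ(\overline{h}\times\overline{h})\bigr)$, and since $\wedge\,\omega$ commutes with suprema while each $d\circ(\overline{h}\times\overline{h})$ is dominated by the single metric $e$ with a bound independent of $d$, the supremum is again dominated by $e$; thus $h$ is a contraction into the finer structure $\mathcal{G}_{d_{\mathcal{G}}}$, giving the required factorization.

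Finally, to obtain the full diagram I would check that these two functors restrict. The reflector sends a single metric $d$ to $\widehat{d}$, so $R$ maps $\mathsf{Pol_m}$ into $\mathsf{sPol_m}$ and exhibits $\mathsf{sPol_m}$ as reflective in $\mathsf{Pol_m}$; dually, the supremum of a simple basis is simple, so $C$ maps $\mathsf{sPol_u}$ into $\mathsf{sPol_m}$ and exhibits $\mathsf{sPol_m}$ as coreflective in $\mathsf{sPol_u}$. All four categories being full subcategories, the relevant inclusions commute on the nose, which is exactly the commutativity recorded by the square.
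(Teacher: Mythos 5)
Your proposal is correct and takes essentially the same route as the paper: the reflection is $1_X\colon(X,\mathcal{G})\to(X,\widehat{\mathcal{G}})$, with the universal property following from Lemma \ref{lem:MinGauge} together with the fact that $e\circ(\overline{f}\times\overline{f})$ is simple and hence equals its own minimal metric (the step the paper compresses into the phrase ``$e$ being simple''), and the coreflection is $1_X\colon(X,d_{\mathcal{G}})\to(X,\mathcal{G})$ with $d_{\mathcal{G}}=\sup_{d\in\mathcal{D}}d$. The only divergences are harmless: you verify the coreflection's universal property directly (the paper delegates it to the general theory of \cite{L15}), you spell out the restrictions to $\mathsf{Pol_m}$ and $\mathsf{sPol_u}$ that the paper calls immediate consequences, and your closing remark about commutativity should be read as concerning the inclusion functors only, since the paper explicitly notes that the square of reflectors and coreflectors does \emph{not} commute.
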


\begin{proof}

For the reflectivity of $\mathsf{sPol_u}$ in $\mathsf{Pol_u}$, let $ (X, \mathcal{G})$ be an object in $\mathsf{Pol_{u}}$ and let $\mathcal{D}$ be a basis for $\mathcal{G}$ consisting of probability metrics.

Then it follows from Lemma \ref{lem:MinGauge} that $\widehat{\mathcal{D}} = \left\{\widehat{d} \mid d \in \mathcal{D}\right\}$ is a basis for a unique simple probability uniform gauge $\widehat{\mathcal{G}}$ on $\mathcal{R}(X)$. Moreover, also from Lemma \ref{lem:MinGauge} it follows that $\widehat{\mathcal{G}}$ does not depend on the particular chosen basis $\mathcal{D}$. Hence  $(X,\widehat{\mathcal{G}})$ is a well-defined object in $\mathsf{sPol_{u}}$ only depending on $(X, \mathcal{G})$. 

The identity map
\begin{equation}
1_X : (X,\mathcal{G}) \rightarrow \left(X,\widehat{\mathcal{G}}\right)\label{eq:reflector}
\end{equation}
moreover defines a reflector. 

The fact that $\widehat{d} \leq d$ for each $d \in \mathcal{G}$ entails that (\ref{eq:reflector}) is a random contraction.
Further, let $(Y,\mathcal{H})$ be an object in $\mathsf{sPol_{u}}$, let $\mathcal{H}_0$ be a basis for $\mathcal{H}$ consisting of simple probability metrics, and let
\begin{equation*}
f : (X,\mathcal{G}) \rightarrow (Y,\mathcal{H})
\end{equation*}
be a random contraction. Then, for $e \in \mathcal{H}_0$, $\omega < \infty$, and $\epsilon > 0$, we find $d \in \mathcal{G}$ such that
\begin{equation*}
e \circ (\overline{f} \times \overline{f}) \wedge \omega \leq d + \epsilon,
\end{equation*}
which, $e$ being simple, by Lemma \ref{lem:MinGauge} leads to
\begin{equation*}
e \circ (\overline{f} \times \overline{f}) \wedge \omega \leq \widehat{d} + \epsilon. 
\end{equation*}
We conclude that
\begin{equation*}
\widehat{f} : \left(X,\widehat{\mathcal{G}}\right) \rightarrow (Y,\mathcal{H}),
\end{equation*}
where $\widehat{f}(x) = f(x)$, is the unique morphism between $\left(X,\widehat{\mathcal{G}}\right)$ and $(Y,\mathcal{H})$ for which $\widehat{f} \circ 1_X = f$. Since measurability is in all instances undisturbed, it follows that (\ref{eq:reflector}) is indeed a reflector.

For the coreflectivity of $\mathsf{Pol_m}$ in $\mathsf{Pol_u}$, first note that $\mathsf{Pol_m}$ is embedded as a subcategory of $\mathsf{Pol_u}$ in the usual way via the principal gauge generated by a unique metric (see \cite{L15}). Then again let $ (X, \mathcal{G})$ be an object in $\mathsf{Pol_{u}}$ and let $\mathcal{D}$ be a basis for $\mathcal{G}$ consisting of probability metrics and define $d_\mathcal{G} := \sup_{d \in \mathcal{D}} d$. It follows immediately from the definitions that $d_\mathcal{G}$ too is a probability metric and that its definition is independent from the chosen basis. Thus $(X, d_\mathcal{G})$ is a well-defined object in $\mathsf{Pol_m}$. It now follows from the general theory in \cite{L15} that 
$$1_X: (X, d_\mathcal{G}) \rightarrow (X, \mathcal{G})$$ indeed defines a coreflector.

The other reflectivity and coreflectivity claims are immediate consequences.
\end{proof}


The probability uniform gauge $\widehat{\mathcal{G}}$ is called the {\em minimal probability uniform gauge} associated with $\mathcal{G}$. Note that the minimal probability uniform gauge associated with a principal probability uniform gauge generated by a unique metric is the simple metric associated with that unique metric. Note also that the diagram in the foregoing theorem is not commutative.

\begin{vb}\label{vb:MinPG}
\begin{enumerate}
\item For the {\em Ky-Fan probability uniform gauge} as defined in Examples \ref{vbn:PGs} we find that $\widehat{\mathcal{G}_K}$ is the Prokhorov probability uniform gauge and that $d_{\mathcal{G}_K}$ is the indicator metric.
\end{enumerate}
\end{vb}


Theorem \ref{thm:LimOpMet} provides an explicit formula for the limit operator associated with a minimal probability metric. Its proof is based on Lemma \ref{lem:TechPaste}, which is a consequence of the well-known Kolmogorov Extension Theorem. 

\begin{thm}[Kolmogorov Extension Theorem]\label{thm:KolEx}
Let $I$ be a non-empty set, $\{X_i \mid i \in I\}$ a collection of Polish spaces, and $\pi_J \in \mathcal{P}(\Pi_{j \in J} X_j)$ for each finite set $J \subset I$. Then there exists $\pi \in \mathcal{P}(\Pi_{i \in I} X_i)$ such that $\pi^J = \pi_J$ for each finite set $J \subset I$ if and only if the collection $\{\pi_{J} \mid J \subset I \textrm{ finite}\}$ is consistent in the sense that $\pi_{J_1}^{J_1 \cap J_2} = \pi_{J_2}^{J_1 \cap J_2}$ for all finite sets $J_1, J_2 \subset I$ with $J_1 \cap J_2 \neq \emptyset$. 
\end{thm}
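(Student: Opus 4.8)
The plan is to establish the two implications separately. The necessity of the consistency condition (the ``only if'' direction) is immediate: if $\pi \in \mathcal{P}(\Pi_{i \in I} X_i)$ satisfies $\pi^J = \pi_J$ for every finite $J \subset I$, then for finite sets $J_1, J_2$ with $J_1 \cap J_2 \neq \emptyset$ one has $\pi_{J_1}^{J_1 \cap J_2} = (\pi^{J_1})^{J_1 \cap J_2} = \pi^{J_1 \cap J_2} = (\pi^{J_2})^{J_1 \cap J_2} = \pi_{J_2}^{J_1 \cap J_2}$, since marginalizing twice amounts to projecting onto the smaller index set once. The whole content of the theorem lies in the sufficiency (``if'') direction, which I would prove by constructing a premeasure on the algebra of cylinder sets and invoking Carath\'eodory's extension theorem.

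First I would introduce the algebra $\mathcal{A}$ of cylinder sets in $\Pi_{i \in I} X_i$, i.e. sets of the form $p_J^{-1}(B)$ where $J \subset I$ is finite, $p_J$ is the projection onto $\Pi_{j \in J} X_j$, and $B \in \mathcal{B}_{\Pi_{j \in J} X_j}$. One checks that $\mathcal{A}$ is indeed an algebra and that it generates the product $\sigma$-field. On $\mathcal{A}$ I would define the set function $\mu(p_J^{-1}(B)) = \pi_J(B)$. The consistency hypothesis is precisely what is needed to see that $\mu$ is well defined: if a cylinder admits two representations over index sets $J_1$ and $J_2$, then passing to $J_1 \cup J_2$ and using $\pi_{J_1}^{J_1 \cap J_2} = \pi_{J_2}^{J_1 \cap J_2}$ shows the two candidate values agree. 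Finite additivity of $\mu$ then follows routinely by lifting finitely many disjoint cylinders to a common finite index set and using additivity of the corresponding $\pi_J$.

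The hard part, and the only place where the Polish structure is genuinely used, is to upgrade finite additivity to countable additivity, equivalently to verify continuity of $\mu$ at $\emptyset$. Here I would argue by contradiction: suppose $C_n \in \mathcal{A}$ with $C_n \downarrow \emptyset$ yet $\mu(C_n) \geq \varepsilon > 0$ for all $n$; after enlarging index sets we may assume the $C_n$ are cylinders over an increasing sequence of finite index sets whose union $J_\infty$ is countable. Since each $X_i$ is Polish, every finite-dimensional marginal $\pi_J$ is Radon and hence inner regular with respect to compact sets, so I would replace the base of each $C_n$ by a compact subset carrying all but a $\varepsilon/2^{n+1}$ fraction of its mass. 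This produces nested cylinders $F_n \subseteq C_n$ with compact bases and $\mu(F_n) > \varepsilon/2 > 0$, so each $F_n$ is nonempty. A diagonal argument over the countably many coordinates in $J_\infty$, exploiting compactness of the successive bases, then extracts a point lying in every $F_n$ and hence in every $C_n$, contradicting $\bigcap_n C_n = \emptyset$. This compactness-plus-inner-regularity step is where I expect the only real difficulty to reside.

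Finally, with $\mu$ a premeasure on the algebra $\mathcal{A}$, Carath\'eodory's extension theorem yields a measure $\pi$ on the generated $\sigma$-field, which is the product $\sigma$-field, with $\pi^J = \pi_J$ for every finite $J$; since $\mu(\Pi_{i \in I} X_i) = 1$, this $\pi$ is a probability measure. Uniqueness, though not explicitly demanded by the statement, is a clean afterthought: the cylinder sets form a $\pi$-system generating the product $\sigma$-field and containing the whole space, so any two probability measures realizing the prescribed finite-dimensional marginals agree on this $\pi$-system and therefore, by the $\pi$--$\lambda$ theorem, on the entire product $\sigma$-field.
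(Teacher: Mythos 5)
The paper does not prove this statement at all: it is quoted verbatim as the classical Kolmogorov Extension Theorem and used as a black box in the proof of Lemma \ref{lem:TechPaste}, so there is no in-paper argument to compare yours against. Your sketch is the standard textbook proof and is correct in outline: the ``only if'' direction by composing projections, and the ``if'' direction via the cylinder algebra, a well-defined finitely additive set function, continuity at $\emptyset$ through inner regularity plus compactness, and Carath\'eodory. You also correctly locate where the Polish hypothesis genuinely enters, namely Ulam's theorem that every Borel probability measure on a Polish space is tight, which supplies the compact approximations of the cylinder bases; without some such regularity the continuity-at-$\emptyset$ step fails (the theorem is false for general measurable spaces). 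Three small points would need tightening in a full write-up: (i) well-definedness of $\mu$ reduces to consistency for nested pairs $J_1 \subset J_1 \cup J_2$, whose intersection is $J_1 \neq \emptyset$, so the paper's hypothesis (stated only for $J_1 \cap J_2 \neq \emptyset$) does suffice, but this should be said explicitly; (ii) the compact-base approximations of the $C_n$ are not automatically nested, so you should define $F_n$ as the intersection of the first $n$ approximations, which yields $\mu(F_n) \geq \varepsilon - \sum_{k \leq n} \varepsilon/2^{k+1} \geq \varepsilon/2$ and hence $F_n \neq \emptyset$; (iii) in the diagonal argument, membership of the limit point in each $F_n$ uses that a cylinder over a compact (hence closed) base is stable under coordinatewise convergence on the countable index set $J_\infty$, the coordinates outside $J_\infty$ being unconstrained. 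With those repairs your argument is complete, and it is in fact more self-contained than the paper, which delegates the entire theorem to the literature.
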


\begin{lem}\label{lem:TechPaste}
Let $\{X_n \mid n \in \mathbb{N}\}$ be a collection of Polish spaces and let $\{\pi_{\{0,n\}} \in \mathcal{P}(X_{0} \times X_{n}) \mid n \in \mathbb{N}_0\}$ be a consistent collection of measures in the sense that $\pi_{\{0,n\}}^{\{0\}} = \pi_{\{0,m\}}^{\{0\}}$ for all $n, m \in \mathbb{N}_0$. Then there exists a measure $\pi \in \mathcal{P}(\Pi_{n \in \mathbb{N}} X_n)$ such that $\pi^{\{0,n\}} = \pi_{\{0,n\}}$ for each $n \in \mathbb{N}_0$.
\end{lem}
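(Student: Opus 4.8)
The plan is to reduce the statement to the Kolmogorov Extension Theorem (Theorem \ref{thm:KolEx}) by constructing a consistent family of finite-dimensional distributions $\{\pi_J \mid J \subset \mathbb{N} \text{ finite}\}$ whose two-dimensional projection onto $X_0 \times X_n$ recovers the prescribed $\pi_{\{0,n\}}$. The only data the hypothesis supplies is the joint law of each pair $(X_0, X_n)$ together with the fact that all of these share a common law $\mu := \pi_{\{0,n\}}^{\{0\}} \in \mathcal{P}(X_0)$ on the ``base'' coordinate; it says nothing about how the coordinates $X_n$ with $n \in \mathbb{N}_0$ relate to one another. The natural way to fill this gap is to declare them conditionally independent given $X_0$, and this is exactly what I would encode into the $\pi_J$.

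First I would disintegrate. Since the $X_n$ are Polish, regular conditional probabilities exist, so for each $n \in \mathbb{N}_0$ I can fix a measurable Markov kernel $\kappa_n : X_0 \to \mathcal{P}(X_n)$ with $\pi_{\{0,n\}}(dx_0, dx_n) = \mu(dx_0)\,\kappa_n(x_0, dx_n)$. For a finite set $J \subset \mathbb{N}$ containing $0$, write $J = \{0\} \cup J'$ with $J' \subset \mathbb{N}_0$, and define $\pi_J$ on $\prod_{j \in J} X_j$ by
\[
\pi_J(B) = \int_{X_0} \int \mathbf{1}_B\big(x_0, (x_n)_{n \in J'}\big) \prod_{n \in J'} \kappa_n(x_0, dx_n)\, \mu(dx_0);
\]
for a finite $J$ not containing $0$ I would simply set $\pi_J := (\pi_{J \cup \{0\}})^{J}$. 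That every factor is a probability kernel makes these well-defined probability measures, and the measurability of the $\kappa_n$ guarantees the integrals make sense.

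It then remains to verify consistency and to read off the conclusion. The key point is the projective property $\pi_{J_2}^{J_1} = \pi_{J_1}$ whenever $J_1 \subset J_2$: deleting a coordinate $n \in J'$ from $\pi_J$ integrates out the factor $\kappa_n(x_0, \cdot)$, which has total mass $1$, while deleting $0$ reproduces precisely the marginal definition used in the case $0 \notin J$. Taking $K = J_1 \cap J_2$, this projective property yields $\pi_{J_1}^{K} = \pi_K = \pi_{J_2}^{K}$, which is exactly the consistency hypothesis of Theorem \ref{thm:KolEx}; the theorem then furnishes $\pi \in \mathcal{P}(\prod_{n \in \mathbb{N}} X_n)$ with $\pi^J = \pi_J$ for every finite $J$. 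Specializing to $J = \{0,n\}$ and recalling $\pi_{\{0,n\}}(dx_0,dx_n) = \mu(dx_0)\,\kappa_n(x_0,dx_n)$ gives $\pi^{\{0,n\}} = \pi_{\{0,n\}}$, as required. The main obstacle is the disintegration step: it is here that the Polish (standard Borel) hypothesis on the $X_n$ is genuinely used, and one must take care to fix honestly measurable versions of the kernels $\kappa_n$ so that the product integrals defining the $\pi_J$ are legitimate; everything after that is routine Fubini-type bookkeeping.
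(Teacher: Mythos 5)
Your proof is correct, but it reaches the finite-dimensional family by a genuinely different route than the paper. Both arguments funnel through the Kolmogorov Extension Theorem (Theorem \ref{thm:KolEx}); the difference is how the consistent measures $\pi_J$ are manufactured. The paper builds them inductively: it repeatedly applies the Gluing Lemma (Lemma \ref{lem:PastingLemma}), gluing $\pi_{\{0,n+1\}}$ onto the already-constructed measure on $X_0 \times \cdots \times X_n$ along the common coordinate $X_0$, and then sets $\pi_F := \pi_{\{0,\ldots,\max F\}}^{F}$, so that consistency is automatic from the tower property $\pi_{\{0,\ldots,n+1\}}^{\{0,\ldots,n\}} = \pi_{\{0,\ldots,n\}}$. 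You instead disintegrate each $\pi_{\{0,n\}}$ as $\mu(dx_0)\,\kappa_n(x_0,dx_n)$ and write down the conditionally independent coupling given $x_0$ in closed form. In fact the two constructions yield the same measures: the glued measure in Villani's lemma is precisely the conditionally independent coupling over the shared coordinate, so your argument essentially inlines the proof of the Gluing Lemma instead of citing it. The trade-offs: your route leans on the disintegration theorem (existence of regular conditional probabilities on Polish spaces), a heavier tool than anything the paper invokes explicitly, but in exchange it produces an explicit formula for every finite-dimensional marginal of $\pi$ and makes the conditional-independence structure of the extension visible, which the paper's induction leaves implicit; it also dispenses with the induction entirely. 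Your consistency verification (integrating out mass-one kernels for $n \in J'$, plus the definitional reduction when $0 \notin J$) and the final specialization to $J = \{0,n\}$ are sound, so the argument is complete.
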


\begin{proof}
Inductively applying Lemma \ref{lem:PastingLemma} shows that for each $n \in \mathbb{N}_0$ there exists a probability measure $\pi_{\{0,\ldots,n\}} \in \mathcal{P}(\Pi_{k = 1}^n X_k)$ with the property that $\pi_{\{0,\ldots,n\}}^{J_m} = \pi_{J_m}$ for all $1 \leq m \leq n$. Furthermore, for a finite set $F \subset \mathbb{N}$, define $\pi_F \in \mathcal{P}(\Pi_{k \in F} X_k)$ by putting $\pi_F = \pi_{\{0,\ldots,\max F\}}^{F}$. Then $\{\pi_{F} \mid F \subset \mathbb{N} \textrm{ finite} \}$ is a consistent collection, and Theorem \ref{thm:KolEx} provides the existence of the desired probability measure. 
\end{proof}


\begin{thm}\label{thm:LimOpMet}
Let $X$ be a Polish space and $d$ a probability metric on $\mathcal{R}(X)$. Then, for $\xi \in \mathcal{R}(X)$ and $(\xi_{n})_n$ in $\mathcal{R}(X)$,
\begin{equation}
\lambda(\xi_n \rightarrow \xi) = \limsup_{n \rightarrow \infty} \widehat{d}\left(\xi, \xi_n\right) = \min \limsup_{n \rightarrow \infty} d\left(\xi^\prime,\xi_n^\prime\right),\label{eq:PIneq}
\end{equation}
the minimum taken over all $\xi^\prime \in \mathcal{R}(X)$ and all sequences $(\xi^\prime_n)_n$ in $\mathcal{R}(X)$ such that $\mathcal{L}(\xi^\prime) = \mathcal{L}(\xi)$ and $\mathcal{L}(\xi_n^\prime) = \mathcal{L}(\xi_n)$ for all $n$. 
\end{thm}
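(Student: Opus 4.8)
The statement packages two equalities. The first, $\lambda(\xi_n \rightarrow \xi) = \limsup_{n} \widehat{d}(\xi,\xi_n)$, I would dispatch immediately: since the minimal probability uniform gauge associated with the principal gauge of $d$ is generated by the single metric $\widehat{d}$, this is nothing but the general formula from \cite{L15} for the limit operator of the approach structure underlying a metric, applied to $\widehat{d}$. All the genuine work therefore goes into the second equality, and the plan is to prove it by showing that $L := \limsup_{n} \widehat{d}(\xi,\xi_n)$ is a lower bound for every admissible value $\limsup_{n} d(\xi',\xi_n')$ and that this lower bound is actually attained.

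The lower bound is free. For any $\xi'$ with $\mathcal{L}(\xi') = \mathcal{L}(\xi)$ and any sequence $(\xi_n')_n$ with $\mathcal{L}(\xi_n') = \mathcal{L}(\xi_n)$ for all $n$, the defining infimum (\ref{eq:defMinQm}) of $\widehat{d}$ gives $\widehat{d}(\xi,\xi_n) \le d(\xi',\xi_n')$ term by term, whence $L \le \limsup_{n} d(\xi',\xi_n')$. Thus the only real content is to construct one admissible pair $(\xi', (\xi_n')_n)$ for which $\limsup_{n} d(\xi',\xi_n') \le L$; together with the reverse inequality just established this forces equality, and simultaneously shows the infimum is a minimum.

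For the construction — the heart of the argument — I would first invoke the defining infimum of each $\widehat{d}(\xi,\xi_n)$ to pick, for every $n$, random variables $\alpha_n,\beta_n \in \mathcal{R}(X)$ with $\mathcal{L}(\alpha_n)=\mathcal{L}(\xi)$, $\mathcal{L}(\beta_n)=\mathcal{L}(\xi_n)$, and $d(\alpha_n,\beta_n) \le \widehat{d}(\xi,\xi_n) + 1/n$. Setting $\pi_{\{0,n\}} := \mathcal{L}(\alpha_n,\beta_n)$, all these pairwise couplings share the common first marginal $\mathcal{L}(\xi)$, so the consistency hypothesis of Lemma \ref{lem:TechPaste} is met; the lemma then yields a single measure $\pi$ on the (again Polish) countable product $X^{\mathbb{N}}$ with $\pi^{\{0,n\}} = \pi_{\{0,n\}}$ for all $n$. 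By the richness assumption on $\Omega$ I can realize $\pi$ as the joint law of genuine random variables $\xi',\xi_1',\xi_2',\dots \in \mathcal{R}(X)$, which then satisfy $\mathcal{L}(\xi')=\mathcal{L}(\xi)$, $\mathcal{L}(\xi_n')=\mathcal{L}(\xi_n)$ and, crucially, $\mathcal{L}(\xi',\xi_n') = \mathcal{L}(\alpha_n,\beta_n)$ for every $n$. Property {\upshape(PM)} transfers distances across equal joint laws, giving $d(\xi',\xi_n') = d(\alpha_n,\beta_n) \le \widehat{d}(\xi,\xi_n) + 1/n$, and hence $\limsup_{n} d(\xi',\xi_n') \le L$, as required.

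The one delicate point I expect is exactly the passage from the family of \emph{pairwise} near-optimal couplings to a \emph{single} coupling of the whole sequence: each $\alpha_n$ a priori lives on its own product and there is no reason for the joint laws to cohere. This is precisely what the gluing-type Lemma \ref{lem:TechPaste} (a consequence of the Kolmogorov Extension Theorem) is designed to resolve, the only thing needing verification being the common-first-marginal consistency condition, which holds because every $\alpha_n$ has law $\mathcal{L}(\xi)$. Once the glued measure exists, the richness of $\Omega$ and property {\upshape(PM)} reduce the remainder to bookkeeping.
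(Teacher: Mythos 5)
Your proposal is correct and takes essentially the same approach as the paper: the same $1/n$-near-optimal couplings $\alpha_n,\beta_n$, glued by Lemma \ref{lem:TechPaste} (with consistency coming from the common first marginal $\mathcal{L}(\xi)$) into a single measure realized on $\Omega$ via the richness assumption, and {\upshape(PM)} transferring $d(\xi',\xi_n') = d(\alpha_n,\beta_n)$, with the reverse inequality coming directly from the definition of $\widehat{d}$. Your explicit dispatch of the first equality via the standard formula from \cite{L15} for the limit operator of a metric approach structure is exactly what the paper leaves implicit.
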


\begin{proof}
For $n \in \mathbb{N}_0$, choose $\xi^{(n)}$ and $\xi^0_{n}$ in $\mathcal{R}(X)$ such that $\mathcal{L}(\xi^{(n)}) = \mathcal{L}(\xi)$, $\mathcal{L}(\xi^0_n) = \mathcal{L}(\xi_n)$, and 
\begin{equation}
d(\xi^{(n)},\xi^0_n) \leq \widehat{d}(\xi,\xi_n) + 1/n.\label{eq:techin1}
\end{equation} 
Put $X_0 = X$, and, for $n \in \mathbb{N}_0$, $X_n = X$ and $\pi_{\{0,n\}} = \mathcal{L}(\xi^{(n)},\xi^0_n) \in \mathcal{P}(X_0 \times X_n)$. Then, by Lemma \ref{lem:TechPaste}, there exists a measure $\pi \in \mathcal{P}(\Pi_{n \in \mathbb{N}} X_n)$ such that $\pi^{\{0,n\}} = \mathcal{L}(\xi^{(n)},\xi_n^0)$ for all $n \in \mathbb{N}_0$. Furthermore, our assumption on $\Omega$ provides the existence of random variables $\xi^{\prime}$ and $(\xi^\prime_{n})_n$ in $\mathcal{R}(X)$ such that
\begin{equation}
\mathcal{L}(\xi^\prime,\xi^\prime_{n}) = \pi^{\{0,n\}} = \mathcal{L}(\xi^{(n)},\xi^0_n)\label{eq:techin2}
\end{equation}
for all $n \in \mathbb{N}_0$. From (\ref{eq:techin1}) and (\ref{eq:techin2}) we now infer that, for $n \in \mathbb{N}_0$,
\begin{equation}
d(\xi^\prime,\xi^\prime_{n}) \leq \widehat{d}(\xi,\xi_n) + 1/n,
\end{equation}
whence we conclude that the right-hand side of (\ref{eq:PIneq}) is dominated by the left-hand side of (\ref{eq:PIneq}). The converse inequality follows by definition of $\widehat{d}$.

\end{proof}

\begin{gev}\label{gev:GenConv}
Let $X$ be a Polish space and $d$ a probability metric on $\mathcal{R}(X)$. Then, for $\xi \in \mathcal{R}(X)$ and $(\xi_{n})_n$ in $\mathcal{R}(X)$,
\begin{equation*}
\lim_n \widehat{d}(\xi,\xi_n) = 0
\end{equation*}
if and only if there exist  $\xi^\prime \in \mathcal{R}(X)$ and $(\xi_n^\prime)_n$ in $\mathcal{R}(X)$ such that 
\begin{equation*}
\mathcal{L}(\xi^\prime) = \mathcal{L}(\xi), \phantom{1} \forall n : \mathcal{L}(\xi_n^\prime) = \mathcal{L}(\xi_n), \textrm{ and } \lim_n d(\xi^\prime,\xi^\prime_n) = 0.
\end{equation*}
\end{gev}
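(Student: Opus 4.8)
The plan is to derive both implications essentially directly from Theorem \ref{thm:LimOpMet}, the only nontrivial input being that the right-hand side of (\ref{eq:PIneq}) is a genuine minimum rather than merely an infimum.

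First I would dispose of the easy ``if'' direction, which does not even require Theorem \ref{thm:LimOpMet}. Suppose $\xi'$ and $(\xi'_n)_n$ exist with $\mathcal{L}(\xi') = \mathcal{L}(\xi)$, $\mathcal{L}(\xi'_n) = \mathcal{L}(\xi_n)$ for all $n$, and $\lim_n d(\xi',\xi'_n) = 0$. By the very definition (\ref{eq:defMinQm}) of $\widehat{d}$, the pair $(\xi',\xi'_n)$ is admissible in the infimum defining $\widehat{d}(\xi,\xi_n)$, so $0 \le \widehat{d}(\xi,\xi_n) \le d(\xi',\xi'_n)$ for every $n$. Letting $n \to \infty$ and using nonnegativity forces $\lim_n \widehat{d}(\xi,\xi_n) = 0$.

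For the converse, suppose $\lim_n \widehat{d}(\xi,\xi_n) = 0$, so in particular $\limsup_n \widehat{d}(\xi,\xi_n) = 0$. Here I would invoke Theorem \ref{thm:LimOpMet}, whose formula (\ref{eq:PIneq}) identifies this $\limsup$ with $\min \limsup_n d(\xi',\xi'_n)$, the minimum taken over all admissible couplings. The key point is that the theorem guarantees this value to be \emph{attained}: there exist $\xi'$ and $(\xi'_n)_n$ with the prescribed laws for which $\limsup_n d(\xi',\xi'_n) = 0$. Since $d \ge 0$, the $\limsup$ vanishing forces the full limit $\lim_n d(\xi',\xi'_n) = 0$, which is exactly the required coupling.

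The only substantive ingredient is therefore the attainment of the minimum in Theorem \ref{thm:LimOpMet}; everything else is immediate from monotonicity and the definition of $\widehat{d}$. Were the right-hand side of (\ref{eq:PIneq}) merely an infimum, the converse would only supply, for each $\delta>0$, a coupling with $\limsup_n d \le \delta$, and one could not in general splice these into a single sequence realizing convergence to $0$. Thus the diagonal gluing argument underlying Theorem \ref{thm:LimOpMet} (through Lemma \ref{lem:TechPaste}) is precisely what makes the corollary work, and I expect the proof to consist of little more than recording this observation.
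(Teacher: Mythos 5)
Your proposal is correct and matches the paper's intent exactly: the corollary is stated without proof precisely because it is the immediate specialization of Theorem \ref{thm:LimOpMet} to the case $\limsup_n \widehat{d}(\xi,\xi_n)=0$, with the essential point being---as you correctly isolate---that the right-hand side of (\ref{eq:PIneq}) is an attained minimum (via Lemma \ref{lem:TechPaste}) rather than a mere infimum. Nothing to add.
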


The above corollary unifies and generalizes various separate results in the literature.

\begin{enumerate}

\item It is proved for the the Ky-Fan metric $K_1$ making use of the Skorokhod-Dudley Theorem in \cite{Dud02}.

\item It is proved for the $L^p$-metric $d_p$ in \cite{V03}.

\item It is proved  for the $L^\infty$-metric $d_\infty$ in \cite{D02}.

\item Finally it is again proved for the total variation metric $d_{TV}$ in \cite{JR97}.

\end{enumerate}

All these results are obtained with different proofs. The general proof of Theorem \ref{thm:LimOpMet} presented here, is inspired by \cite{D02}.

The following result shows that Theorem \ref{thm:LimOpMet} is partially extendable to the limit operator associated with the approach structure underlying a minimal probability uniform gauge. 

\begin{thm}\label{thm:MinLimOp}
Let $(X,\mathcal{G})$ be an object in $\mathsf{Pol_u}$ and $\lambda_{\mathcal{G}}$ (respectively $\lambda_{\widehat{\mathcal{G}}}$) the limit operator associated with the approach structure underlying $\mathcal{G}$ (respectively $\widehat{\mathcal{G}}$). Then, for $\xi \in \mathcal{R}(X)$ and $(\xi_{n})_n$ in $\mathcal{R}(X)$,
\begin{equation}
\lambda_{\widehat{\mathcal{G}}}\left(\xi_n \rightarrow \xi\right) \leq \inf \lambda_{\mathcal{G}}\left(\xi_n^\prime \rightarrow \xi^\prime\right),\label{eq:MinLimOp}
\end{equation}
the infimum taken over all $\xi^\prime \in \mathcal{R}(X)$ and all sequences $(\xi^\prime_n)_n$ in $\mathcal{R}(X)$ such that $\mathcal{L}(\xi^\prime) = \mathcal{L}(\xi)$ and $\mathcal{L}(\xi_n^\prime) = \mathcal{L}(\xi_n)$ for all $n$.
\end{thm}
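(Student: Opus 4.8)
The plan is to reduce both limit operators to explicit $\limsup$-formulas over a common basis and then to exploit the pointwise domination $\widehat{d} \le d$ along matching couplings. First I would recall the standard description of the limit operator attached to the approach structure underlying a uniform gauge: if $\mathcal{D}$ is a basis for $\mathcal{G}$ consisting of probability metrics, then for $\xi \in \mathcal{R}(X)$ and $(\xi_n)_n$ in $\mathcal{R}(X)$,
\begin{equation*}
\lambda_{\mathcal{G}}(\xi_n \to \xi) = \sup_{d \in \mathcal{D}} \limsup_{n \to \infty} d(\xi, \xi_n).
\end{equation*}
Indeed, the value of the limit operator of a sequence computed through a single quasi-metric $d$ is $\limsup_n d(\xi, \xi_n)$, and the domination relation defining the gauge guarantees that the supremum over the full gauge may equally well be computed over any basis. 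By the reflectivity theorem, $\widehat{\mathcal{D}} = \{\widehat{d} \mid d \in \mathcal{D}\}$ is a basis for $\widehat{\mathcal{G}}$, so the same description yields
\begin{equation*}
\lambda_{\widehat{\mathcal{G}}}(\xi_n \to \xi) = \sup_{d \in \mathcal{D}} \limsup_{n \to \infty} \widehat{d}(\xi, \xi_n).
\end{equation*}

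Next I would fix an arbitrary pair $\xi' \in \mathcal{R}(X)$ and $(\xi_n')_n$ in $\mathcal{R}(X)$ subject to $\mathcal{L}(\xi') = \mathcal{L}(\xi)$ and $\mathcal{L}(\xi_n') = \mathcal{L}(\xi_n)$ for all $n$. For each $d \in \mathcal{D}$ and each $n$, the pair $(\xi', \xi_n')$ is admissible in the infimum \eqref{eq:defMinQm} defining $\widehat{d}(\xi, \xi_n)$, so
\begin{equation*}
\widehat{d}(\xi, \xi_n) \le d(\xi', \xi_n').
\end{equation*}
Passing to $\limsup_n$ and then taking the supremum over $d \in \mathcal{D}$ gives $\lambda_{\widehat{\mathcal{G}}}(\xi_n \to \xi) \le \lambda_{\mathcal{G}}(\xi_n' \to \xi')$. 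Since the left-hand side does not depend on the chosen coupling, taking the infimum over all admissible $\xi'$ and $(\xi_n')_n$ yields \eqref{eq:MinLimOp}.

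The routine character of this argument is worth contrasting with Theorem \ref{thm:LimOpMet}, where the analogous statement for a single metric is an \emph{equality}, realized by an actual minimizing coupling produced via the gluing and Kolmogorov-extension machinery. I expect the main obstacle to upgrading \eqref{eq:MinLimOp} to an equality to lie precisely there: that machinery produces a single coupling that is simultaneously optimal along the whole sequence for \emph{one} metric, whereas a genuine gauge involves an entire family $\mathcal{D}$, and the optimal couplings for different members of $\mathcal{D}$ need not coincide. Consequently the supremum over $\mathcal{D}$ and the infimum over couplings cannot in general be interchanged, and only the displayed inequality survives—which is exactly the sense in which Theorem \ref{thm:LimOpMet} is only \emph{partially} extendable.
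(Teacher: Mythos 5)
Your argument is correct and is essentially the paper's own proof: the paper likewise rests entirely on the pointwise domination $\widehat{d}(\xi,\xi_n)\leq d(\xi^\prime,\xi_n^\prime)$ coming from the definition of $\widehat{d}$, and your $\sup$-over-basis/$\limsup$ description of the limit operator (together with the observation that $\widehat{\mathcal{D}}$ is a basis for $\widehat{\mathcal{G}}$) is precisely the routine step the paper compresses into ``this easily gives (\ref{eq:MinLimOp})''. One small caution on your closing aside: the paper explicitly leaves open whether the inequality can be strict, so the failure of the $\sup$/$\inf$ interchange should be presented as an obstacle to this proof method, not asserted as a fact that equality genuinely fails.
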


\begin{proof}
By definition of $\widehat{d}$, we have, for $n \in \mathbb{N}$, 
\begin{equation*}
\widehat{d}(\xi,\xi_n) \leq d(\xi^\prime,\xi_n^\prime)
\end{equation*}
for all $\xi^\prime, \xi_n^\prime \in \mathcal{R}(X)$ with $\mathcal{L}(\xi^\prime) = \mathcal{L}(\xi)$ and $\mathcal{L}(\xi_n^\prime) = \mathcal{L}(\xi_n)$. This easily gives (\ref{eq:MinLimOp}).
\end{proof}

Whether or not the inequality in (\ref{eq:MinLimOp}) is strict in general, remains an open problem.

Let $(X,d)$ be a complete and separable metric space, $\lambda_w$ the limit operator associated with the weak approach structure on $\mathcal{R}(X)$, and $\lambda_{\mathbb{P}}$ the limit operator associated with the approach structure of convergence in probability on $\mathcal{R}(X)$, see e.g. \cite{L15}. In \cite{L15}, p. 312, it is shown that, for $\xi \in \mathcal{R}(X)$ and $(\xi_{n})_n$ in $\mathcal{R}(X)$, the inequality 
\begin{equation*}
\lambda_w(\xi_n \rightarrow \xi) \leq \lambda_{\mathbb{P}}(\xi_n \rightarrow \xi) 
\end{equation*}
always holds. Again, we derive this result as a corollary from Theorem \ref{thm:MinLimOp}. 

\begin{gev}\label{last}
For $\xi \in \mathcal{R}(X)$ and $(\xi_n)_n$ in $\mathcal{R}(X)$, 
\begin{equation*}
\lambda_w(\xi_n \rightarrow \xi) \leq \inf \lambda_{\mathbb{P}}\left(\xi^\prime_n \rightarrow \xi^\prime\right), 
\end{equation*}
the infimum taken over all $\xi^\prime \in \mathcal{R}(X)$ and all $(\xi^\prime_n)_n$ in $\mathcal{R}(X)$ such that $\mathcal{L}(\xi^\prime) = \mathcal{L}(\xi)$ and $\mathcal{L}(\xi_n^\prime) = \mathcal{L}(\xi_n)$ for all $n$.
\end{gev}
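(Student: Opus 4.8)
The plan is to recognize Corollary \ref{last} as a direct specialization of Theorem \ref{thm:MinLimOp} to a single, carefully chosen probability uniform gauge, namely the Ky-Fan gauge $\mathcal{G}_K$. Once this gauge is selected, the statement falls out by matching three identifications that have already been recorded earlier in the excerpt, so no new analytic work is needed and the argument reduces to substitution.

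First I would set $\mathcal{G} = \mathcal{G}_K$ in Theorem \ref{thm:MinLimOp}. By the first item of Examples \ref{vbn:PGs}, the approach structure underlying $\mathcal{G}_K$ is exactly the approach structure of convergence in probability $\mathcal{A}_{\mathbb{P}}$, so $\lambda_{\mathcal{G}_K} = \lambda_{\mathbb{P}}$; this accounts for the right-hand side of the desired inequality. For the left-hand side I would invoke Example \ref{vb:MinPG}, where the minimal probability uniform gauge $\widehat{\mathcal{G}_K}$ is shown to be precisely the Prokhorov gauge $\mathcal{G}_P$. Combining this with the second item of Examples \ref{vbn:PGs}, which identifies the approach structure underlying $\mathcal{G}_P$ as the weak approach structure $\mathcal{A}_w$, yields $\lambda_{\widehat{\mathcal{G}_K}} = \lambda_{\mathcal{G}_P} = \lambda_w$.

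With these three equalities in hand, Theorem \ref{thm:MinLimOp} applied to $\mathcal{G}_K$ reads
\begin{equation*}
\lambda_w(\xi_n \rightarrow \xi) = \lambda_{\widehat{\mathcal{G}_K}}(\xi_n \rightarrow \xi) \leq \inf \lambda_{\mathcal{G}_K}(\xi_n^\prime \rightarrow \xi^\prime) = \inf \lambda_{\mathbb{P}}(\xi_n^\prime \rightarrow \xi^\prime),
\end{equation*}
which is exactly the assertion of Corollary \ref{last}. The only point demanding attention — and it is bookkeeping rather than a genuine obstacle — is to confirm that the infimum furnished by Theorem \ref{thm:MinLimOp} ranges over the very same set of law-preserving substitutions as the infimum in the corollary; since both sides constrain $\mathcal{L}(\xi^\prime) = \mathcal{L}(\xi)$ and $\mathcal{L}(\xi_n^\prime) = \mathcal{L}(\xi_n)$ for all $n$, the two index sets coincide verbatim, and the proof is complete.
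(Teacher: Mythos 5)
Your proposal is correct and follows exactly the paper's own proof: specialize Theorem \ref{thm:MinLimOp} to $\mathcal{G} = \mathcal{G}_K$, then invoke Examples \ref{vbn:PGs} (identifying $\lambda_{\mathcal{G}_K} = \lambda_{\mathbb{P}}$ and $\lambda_{\mathcal{G}_P} = \lambda_w$) and Example \ref{vb:MinPG} (giving $\widehat{\mathcal{G}_K} = \mathcal{G}_P$). Your added check that the two infima range over the same set of law-preserving substitutions is a harmless bit of bookkeeping the paper leaves implicit.
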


\begin{proof}
From Examples \ref{vbn:PGs} we know that $\mathcal{A}_{\mathbb{P}}$ is the underlying approach structure of $\mathcal{G}_K$, and $\mathcal{A}_{w}$ the underlying approach structure of $\mathcal{G}_P$. Furthermore, in Example \ref{vb:MinPG} we have established that $\widehat{\mathcal{G}_K} = \mathcal{G}_P$. Now it suffices to apply Theorem \ref{thm:MinLimOp} in the case where $\mathcal{G} = \mathcal{G}_K$.
\end{proof}

\end{document}